\theoremstyle{definition}
\newtheorem*{definition}{Definition}
\newtheorem*{definitions}{Definitions}
\newtheorem{theorem}{Theorem}
\newtheorem{lemma}{Lemma}
\newtheorem{remark}{Remark}
\begin{document}\title{A Simple Bijection Between 231-Avoiding and 312-Avoiding Placements}
\author{Jonathan Bloom\\Dartmouth College \and Dan Saracino\\Colgate University}
\date{}
\maketitle
\begin{abstract}
Stankova and West proved in 2002 that the patterns 231 and 312 are shape-Wilf-equivalent.  Their proof was nonbijective.  We give a new characterization of 231 and 312 avoiding full rook placements and use this to give a simple bijection that demonstrates the shape-Wilf-equivalence. 
\end{abstract}

\section{Introduction}

For any pattern $\tau\in S_k$, let $S_n(\tau)$ denote the set of permutations in $S_n$ that avoid $\tau$, in the sense that they have no subsequence order-isomorphic to $\tau$. For any Ferrers board $F$, let $S_F(\tau)$ denote the set of all full rook placements on $F$ that avoid $\tau$.  We say  that two patterns $\tau$ and $\sigma$ are \emph{Wilf-equivalent}, and write $\tau \sim \sigma$, if $|S_n(\tau)|=|S_n({\sigma})|$ for all $n> 0$.  We say that $\tau$ and $\sigma$ are \emph{shape-Wilf-equivalent}, and write $\tau \sim_s \sigma$,  if $|S_F(\tau)|=|S_F(\sigma)|$ for all $F$.  So shape-Wilf-equivalence implies Wilf-equivalence, as we see by considering square Ferrers boards. (The relevant definitions will be reviewed in Section 2.)

The concept of shape-Wilf-equivalence was introduced in [1], as a means for obtaining results about Wilf-equivalence. Since shape-Wilf-equivalence is stronger than Wilf-equivlence,  positive results about it are rare.  The only ``general result" was obtained in [1], where it was shown that the patterns $k\cdots 321$ and $123\cdots k$ are shape-Wilf-equivalent for every positive $k$. Later, in [4], Stankova and West proved that the patterns 231 and 312 are shape-Wilf-equivalent, and the motivation for our paper comes from their result.  Their proof that $|S_F(231)|=|S_F(312)|$ was nonbijective, and somewhat complicated.  Our purpose here it to give a simple bijection between $S_F(231)$ and $S_F(312)$.

We will do so by associating a sequence of nonnegative integers to each full rook placement on $F$, and characterizing those sequences that arise from 231-avoiding or 312-avoiding placements. We will give a simple way to transform a sequence arising from a 231-avoiding placement into a sequence arising from a 312-avoiding placement, and vice-versa.

In Section 2, we will review the needed definitions, define our bijection, and state the Theorems needed to verify that it is indeed a bijection.  In Sections 3 and 4 we will prove these theorems.

Vit Jel\'inek has pointed out to us that a bijective proof of the shape-Wilf-equivalence of the patterns 231 and 312 can also be obtained from his work on pattern--avoidance in matchings.  See [3], where a bijection is obtained by establishing an isomorphism between generating trees.  Examples on small Ferrers boards show that Jel\'inek's bijection differs from ours.  

\section{The bijection}

\begin{definitions}
Let $\mathcal{A}$ be an $n\times n$ array of unit squares and coordinatize it by placing the bottom left corner of $\mathcal{A}$ at the origin in the $xy$-plane.   We refer to the corners of all the squares in $\mathcal{A}$ as \emph{vertices} and reference them by their cardinal position.  For example, the upper right corner will be called the NE corner.   For any vertex $V=(a,b)$ we define $R(V)$ to be the rectangular array of squares bounded by the lines $x=0, x=a, y=0, $ and $y=b$.  

A \emph{Ferrers board} is any subset $F$ of $\mathcal{A}$ with the property that $R(V)\subseteq F$ for each vertex in $F$.  We define the \emph{right/up border} of $F$ to be the border of $F$ excluding the vertical left hand side and horizontal bottom.
\end{definitions}

Next we need to define the generalization of a permutation for the context of Ferrers boards. 

\begin{definitions}
A \emph{rook placement} on a Ferrers board $F$ is a subset of $F$ that contains at most one square from each column of $F$ and at most one square from each row of $F$.  We indicate these squares by putting markers in them.  Likewise a \emph{full rook placement} is a rook placement such that each row and each column has exactly one marker in it.  We say a rook placement $P$ on a Ferrers board $F$ \emph{avoids} $\tau$ if and only if for every vertex $V$ on the right/up border the permutation that is order-isomorphic to the restriction of $P$ to $R(V)$ avoids $\tau$ in the usual sense.  
\end{definitions}

\begin{definition}
For any rook placement $P$ on $F$ and any vertex $V$ of $F$, we denote by $S(P,V)$ the maximal length of an increasing sequence of $P$ in $R(V)$.
\end{definition}

To define our bijection from $S_F(231)$ to $S_F(312)$, we first associate to each full rook placement $P$ on $F$ a sequence $S(P,F)$.

\medskip

\noindent \textbf{Notation}.  For any full rook placement on a Ferrers board $F$, $S(P,F)$ denotes the sequence of nonnegative integers obtained by taking $S(P,V)$ for all $V$ on the right/up border of $F$, starting with the vertex at the top left corner of $F$.

\begin{theorem}\label{thm1} 
If $P$ is in $S_F(231)$ or $S_F(312)$, then $S(P,F)$ and $F$ determine $P$.  
\end{theorem}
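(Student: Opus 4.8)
Since $231^{-1}=312$ and passing to the inverse of a permutation corresponds to reflecting the Ferrers board across the main diagonal, I would first reduce to a single case. Writing $F^{t}$ for the reflected (still Ferrers) board, the map $(a,b)\mapsto(b,a)$ carries the right/up border of $F$ onto that of $F^{t}$, preserves lengths of increasing sequences, and sends $R(V)$ to $R(V^{t})$; it therefore intertwines $S(P,\cdot)$ with $S(P^{t},\cdot)$ and carries $S_F(231)$ onto $S_{F^{t}}(312)$. Hence the two cases are equivalent, and I would prove the statement only for $P\in S_F(312)$.

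The plan is then to reconstruct $P$ from $S(P,F)$ by a left-to-right sweep that recovers one rook per column. First I would record the elementary behaviour of the border sequence: consecutive border vertices differ by a single rightward step, which appends an entire column of $F$ (hence at most one new rook) to $R(V)$, or by a single downward step, which deletes one row (hence at most one rook); so $S$ changes by at most $1$ at each step, weakly increasing on rightward steps and weakly decreasing on downward steps. The rightward step into column $c$ runs from $U_c=(c-1,\lambda_c)$ to $W_c=(c,\lambda_c)$, where $\lambda_c$ is the height of column $c$, and $R(W_c)\setminus R(U_c)$ is exactly that column, containing its unique rook. Thus $S(P,W_c)-S(P,U_c)\in\{0,1\}$ records whether that rook extends a longest increasing sequence of the rooks already lying to its lower left. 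Assuming inductively that the rooks of columns $1,\dots,c-1$ are known, this increment already constrains the row $r_c$ of the new rook, and the later downward steps that delete row $r_c$ supply a second family of constraints.

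The main obstacle is to show that these constraints pin down $r_c$ uniquely, and this is precisely where $312$-avoidance is essential: when the new rook does not extend a longest increasing sequence (increment $0$), its row is not read off from any jump, and a priori several rows are consistent with the value $S(P,W_c)$. The crux is to prove that, among all rows consistent with the reconstructed left part and with every subsequent border value, $312$-avoidance leaves exactly one choice — intuitively, avoidance forces the rook into the unique position that neither creates a forbidden pattern with the rooks to its left nor disturbs the prescribed increasing-sequence lengths recorded further along the border. I would formalize this as a direct injectivity argument: take $P,Q\in S_F(312)$ with $S(P,F)=S(Q,F)$, let $c$ be the leftmost column on which they disagree, with rooks $(c,p)$ and $(c,q)$, $p\neq q$, and derive a forbidden $312$ pattern in $P$ or in $Q$ from this discrepancy, using that the two placements agree strictly to the left of $c$ and that all recorded increasing-sequence lengths coincide. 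The step I expect to require the most care is controlling the interaction with the rooks in the columns to the right of $c$, whose rows also differ, so that the forbidden pattern can be shown to be genuinely unavoidable.
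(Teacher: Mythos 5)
Your reduction between the two patterns via reflection across the diagonal is correct and matches the paper's (which goes in the opposite direction, proving the 231 case and deducing the 312 case). But the rest is a plan rather than a proof: the entire content of the theorem is the step you defer. You write that ``the crux is to prove that \dots avoidance leaves exactly one choice'' and that you would ``derive a forbidden 312 pattern in $P$ or in $Q$ from this discrepancy,'' but no such derivation is given, and you yourself flag the interaction with the columns to the right of $c$ as the part requiring the most care. That interaction is exactly the difficulty, so the proposal has a genuine gap where the argument should be.

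Moreover, the left-to-right sweep has a structural problem at the very first step: for column $1$ the only nearby border datum is $S(P,(1,\lambda_1))$, which always equals $1$ (the left edge is not part of the right/up border), so nothing local determines the row of the first rook, and your induction ``assuming the rooks of columns $1,\dots,c-1$ are known'' cannot get started from local information. The paper instead peels off the \emph{rightmost} column, where the border runs down the entire right edge and exposes the values $b_r,\dots,b_0$; the marker's row is identified as the largest $j$ with $b_j>b_{j-1}$, and the key tool is Lemma~\ref{equality_of_labels}: in a 231-avoiding placement, if the top row of $R(V_2)$ contains a marker with another marker to its right, then $S(P,V_1)=S(P,V_2)$ for $V_1$ directly below $V_2$. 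This lemma both forces the markers above the right-column marker to form a decreasing sequence and lets one reconstruct the $S$-values on the previous vertical line, so the column and its row can be deleted and the argument iterated. Some substitute for this lemma --- a structural consequence of avoidance that converts equalities among border values into positional information --- is what your proposal is missing.
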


We will prove Theorem~\ref{thm1} in Section 3 by giving a ``reverse algorithm" for the map $P \rightarrow S(P,F)$ consequently establishing injectivity.
  
Readers familiar with Fomin's growth diagram algorithm will note that the values of $S(P,F)$ are the first entries of the partitions in the oscillating tableaux produced by the algorithm.  Theorem~\ref{thm1} may be restated by saying that for $P$ in $S_F(231)$ or $P$ in $S_F(312)$ the first entries in the partitions determine the oscillating tableaux.  

To define our bijection, we will need to characterize those sequences that arise from $P\in S_F(231)$ or $P\in S_F(312)$.

\begin{definition}  
If $F$ is a Ferrers board, then an \emph{F-sequence} is a sequence of nonnegative integers assigned to the vertices on the right/up border of $F$, starting with the vertex at the top left corner.
\end{definition}

\begin{definition}[the 231-conditions]
If $F$ is a Ferrers board and $S$ is an $F$-sequence, then the \emph{231-conditions for the pair $(F,S)$} are the following three conditions:

\begin{itemize}
\item[(i)]  (monotonicity conditions)  If $V_1$ and $V_2$ are vertices on the right/up border and $V_1$ is either directly to the left of $V_2$ or directly below $V_2$ then $S(V_1)\leq S(V_2)\leq S(V_1)+1$.
\item[(ii)] (0-conditions)  The first and last values of $S$ are 0, and there do not exist consecutive vertices $V_1$ and $V_2$ such that $S(V_1)=0=S(V_2)$.
\item[(iii)] (diagonal condition) If $V_1$ and $V_2$ are vertices on the right/up border that are at the left and right ends of a diagonal with slope $-1$ that lies entirely within $F$, then $S(V_1)\leq S(V_2)$.

\end{itemize}
\end{definition}

\begin{definition}[the 312-conditions]
With $S$ as in the preceding definition, the \emph{312-conditions for the pair $(F,S)$} are the same as the 231-conditions, except that we reverse the inequalities in the diagonal condition.
\end{definition}

The following definition is often useful when dealing with the diagonal condition.

\begin{definition}
We refer to a pair of vertices $V_1, V_2$ as \emph{diagonal vertices} or \emph{F-diagonal vertices} if they are on the right/up border of $F$ and are at the left and right ends of a diagonal with slope $-1$ that lies entirely within $F$.
\end{definition}

\begin{theorem}\label{thm2}
If $F$ is a Ferrers board whose longest row and longest column have the same length, and $S$ is an $F$-sequence, then there exists $P\in S_F(231)$ (respectively, $P\in S_F(312)$) such that $S(P,F)=S$ if and only if $(F,S)$ satisfies the 231-conditions (respectively, the 312-conditions).
\end{theorem}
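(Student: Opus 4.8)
The plan is to establish both implications for the $231$-case and to observe that the $312$-case is identical after reversing every diagonal inequality. First I would dispose of the monotonicity conditions (i) and the $0$-conditions (ii), which in fact hold for \emph{every} full rook placement on a board $F$ whose longest row and longest column agree (this hypothesis is exactly what guarantees that full rook placements exist, since it forces equal numbers of rows and columns). For (i), a rightward or upward step from $V_1$ to $V_2$ gives $R(V_1)\subseteq R(V_2)$, so $S(P,V_1)\le S(P,V_2)$; and since $R(V_2)$ adds only one new column (or row), an increasing sequence in $R(V_2)$ can use at most one marker outside $R(V_1)$, whence $S(P,V_2)\le S(P,V_1)+1$. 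For (ii), the top-left and bottom-right rectangles are degenerate, so their $S$-values are $0$; and two consecutive zeros are impossible because a rightward step $V_1\to V_2$ forces the marker of the newly completed column into $R(V_2)$ (so $S(P,V_2)\ge 1$), while a downward step $V_1\to V_2$ forces the marker of the row being left behind into $R(V_1)$ (so $S(P,V_1)\ge 1$). None of this uses avoidance.

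The crux of necessity is the diagonal condition (iii). Given $F$-diagonal vertices $V_1=(a_1,b_1)$ and $V_2=(a_2,b_2)$ on a common anti-diagonal $x+y=c$, with $R(V_1)$ tall-and-narrow and $R(V_2)$ short-and-wide, neither rectangle contains the other, so $S(P,V_1)\le S(P,V_2)$ is a genuine consequence of $231$-avoidance (it fails for placements containing $231$). I would take an increasing sequence of length $m=S(P,V_1)$ in $R(V_1)$ and split it at height $b_2$ into a low part lying in $R(V_2)$ and a high part lying in the top band $[0,a_1]\times(b_2,b_1]$. The key observation is that, pairing the last low marker with the first high marker above it, avoidance of $231$ forces every marker in the right band $(a_1,a_2]\times[0,b_2]$ to have height exceeding that of the last low marker, so such markers can be appended to the low part without destroying monotonicity. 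The difficulty is that this one-step replacement only guarantees that the right-band markers are \emph{high enough}, not that they are \emph{numerous enough}; securing an increasing sequence of length at least $m$ in $R(V_2)$ I would do by induction along the recursive structure of a $231$-avoider, in which everything to the left of the maximum is smaller than everything to its right, recursing separately on the two blocks. I expect this length bookkeeping across the anti-diagonal to be the principal obstacle on the necessity side.

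For sufficiency I would argue by reconstruction, leaning on the reverse algorithm established in the proof of Theorem~\ref{thm1}. Given $(F,S)$ satisfying the $231$-conditions, I run that algorithm to produce a candidate placement $P$: the monotonicity condition (i) records, at each border step, whether $S$ jumps by $1$ or repeats, and this data pins down a unique marker in each row and column, while (i) and (ii) together are exactly what is needed to verify that $P$ is a bona fide full rook placement with $S(P,F)=S$. It then remains to show that $P$ avoids $231$, and this is where the diagonal condition (iii) enters: a $231$ pattern in $P$ would, at the appropriate border vertex, produce a pair of $F$-diagonal vertices across which $S$ strictly decreases, contradicting (iii). I would make this precise by induction on the number of cells of $F$, peeling off a corner so that the restricted pair $(F',S')$ still satisfies the conditions, applying the inductive hypothesis to get a $231$-avoiding placement on $F'$, and checking that reinserting the peeled marker at the location forced by $S$ creates no $231$ pattern. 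The $312$ statement follows verbatim with all diagonal inequalities reversed. I anticipate that converting a hypothetical $231$ pattern into a violation of the diagonal condition—the converse of the transport argument above—will be the main difficulty here.
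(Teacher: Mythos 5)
Your treatment of the monotonicity and $0$-conditions is correct and matches the paper, but both of the hard directions are left as plans with genuine gaps. On necessity, your transport argument for the diagonal condition has a concrete flaw: the triple $U,X,W$ (last low marker, first high marker, a low right-band marker) does form a $231$ configuration, but on a Ferrers board a pattern only counts if all three markers lie in a common $R(V)$ for $V$ on the right/up border. The bounding box of $U,X,W$ has its NE corner at $(w_1,x_2)$ with $w_1+x_2$ possibly exceeding $a_1+b_1$, so when $F$ hugs the anti-diagonal this corner can lie outside $F$ and $231$-avoidance forbids nothing. Even granting the claim, you yourself concede that it only makes the right-band markers ``high enough,'' not mutually increasing or numerous enough, and you defer the entire counting problem to an unspecified recursion on the block structure of $231$-avoiders --- but that block decomposition is a statement about permutations on rectangles and does not transfer cleanly to a board truncated along the diagonal. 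This deferred step is the theorem, not a detail. The paper avoids all of this by proving the diagonal condition by induction on the number of squares of $F$, peeling off the top square $\mathcal{B}$ of the right-hand column and splitting on whether $\mathcal{B}$ holds a marker; the only new diagonal pair is handled by Lemma~\ref{equality_of_labels}.

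On sufficiency, invoking the reverse algorithm of Theorem~\ref{thm1} is circular in spirit: that algorithm's correctness is proved only for sequences of the form $S(P,F)$ with $P$ already known to be $231$-avoiding, so for an abstract $S$ satisfying (i)--(iii) you must independently show it yields a genuine full rook placement, that the placement avoids $231$, and that its $S$-values reproduce $S$. Your ``peel a corner'' induction is the right general shape, but it misses the step that makes the paper's induction work: when the peeled square $\mathcal{B}$ carries no marker (the case $b=a$ or $b=c$), the restriction of $S$ to $F\setminus\mathcal{B}$ need not satisfy the $231$-conditions, and the paper must redefine the value at the SW corner $E$ of $\mathcal{B}$ to be $\min(a,d)$ and then verify the diagonal condition for the modified pair. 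Likewise, showing that the inductively obtained placement creates no $231$ pattern once $\mathcal{B}$ is restored is not a routine check; the paper needs the diagonal condition together with Lemma~\ref{equality_of_labels} to force $c=d$ and to rule out a marker in the offending square via Lemma~\ref{lemma_growthrule}. As written, your proposal identifies the difficulties but does not resolve them.
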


Theorem~\ref{thm2} will be proved in Section 4.

\medskip

To obtain our bijection,  we need  a way to take $P\in S_F(231)$ (respectively, $P\in S_F(312)$ and transform $S(P,F)$ into a sequence satisfying the 312-conditions (respectively, the 231-conditions).  To do this we need our first  lemma.

\begin{lemma}\label{vertex_to_number_lemma}
For any Ferrers board $F$ and vertex $V$ on its right/up border, there exists an integer $N(F,V)$ such that for every full rook placement $P$ on $F$, there are exactly $N(F,V)$ markers of $P$ in $R(V)$.
\end{lemma}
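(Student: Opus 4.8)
The plan is to show that the number of markers in $R(V)$ is governed by a simple closed formula depending only on $F$ and $V$, and then to verify that every full rook placement realizes it. Write $V=(a,b)$, so that $R(V)$ is the $a\times b$ block of cells occupying the first $a$ columns and first $b$ rows of $F$; since $V$ lies on the right/up border, the defining Ferrers property gives $R(V)\subseteq F$. Let $w$ denote the length of the longest (bottom) row of $F$, equivalently the number of nonempty columns. I claim that $N(F,V)=a+b-w$ is the desired integer.

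First I would establish the key geometric fact: every column of $F$ strictly to the right of $R(V)$ has height at most $b$. To see this, consider the cell $c^*$ whose SW corner is $V=(a,b)$, that is, the cell lying in column $a+1$ and row $b+1$. If $c^*$ belonged to $F$, then its NE corner $(a+1,b+1)$ would be a vertex of $F$, and the property $R((a+1,b+1))\subseteq F$ would force all four cells meeting at $V$ to lie in $F$, making $V$ an interior vertex rather than a right/up border vertex---a contradiction. Hence $c^*\notin F$, so column $a+1$ has height at most $b$; since the column heights of a Ferrers board are weakly decreasing, every column to the right of $R(V)$ has height at most $b$. This step, correctly translating the hypothesis ``$V$ is on the right/up border'' into a statement about the cells adjacent to $V$ by means of the defining property of a Ferrers board, is the one place that needs care, and it is where I expect the only real subtlety to lie.

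With this in hand the count is immediate and manifestly placement-independent. Fix any full rook placement $P$ on $F$. Each of the columns $a+1,\dots,w$ is nonempty and therefore carries exactly one marker of $P$; by the geometric fact, each such marker lies in a row of index at most $b$, so it sits to the right of $R(V)$ but within the first $b$ rows. Thus exactly $w-a$ markers of $P$ occupy the first $b$ rows outside $R(V)$. On the other hand, each of the first $b$ rows is nonempty and carries exactly one marker, giving $b$ markers in the first $b$ rows, each of which lies either in $R(V)$ or to its right. Subtracting, the number of markers of $P$ in $R(V)$ equals $b-(w-a)=a+b-w$, independent of $P$, and setting $N(F,V)=a+b-w$ finishes the proof. (If $F$ admits no full rook placement the statement is vacuous, so we lose nothing in assuming one exists; note that existence forces the longest row and longest column of $F$ to have equal length, which is exactly what makes the symmetric count using rows agree with the one above.)
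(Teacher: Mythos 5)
Your proof is correct, but it takes a different route from the paper's. The paper disposes of this lemma with a two-sentence induction along the right/up border: at the top left corner $R(V)$ contains no markers, and whenever $V_1$ is directly to the left of, or directly below, $V_2$ on the border, $R(V_2)$ contains exactly one more marker than $R(V_1)$ --- namely the unique marker of the newly absorbed column (whose height is exactly $b$, since that border edge is its top) or of the newly absorbed row. Hence the count at $V$ is determined by the border path from the corner to $V$, independently of $P$. You instead derive the closed formula $N(F,V)=a+b-w$ by a global double count, whose crux is the observation that every column strictly to the right of $R(V)$ has height at most $b$; your four-cell argument for this is the correct translation of ``$V$ lies on the right/up border,'' though it does not literally apply at the degenerate corner vertices $a=0$ and $b=0$, where the claim is immediate anyway. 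Both arguments are sound and elementary. The paper's incremental version is the one it actually reuses later (the proof of Lemma 2 quotes precisely the fact that $N$ goes up by one per rightward step and down by one per downward step along the border); your version buys an explicit value for $N(F,V)$ and makes the invariance of $N$ along slope $-1$ diagonals visible at a glance, since $a+b$ is constant there.
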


\begin{proof}
Take any full rook placement $P$ on $F$. We proceed inductively, starting with the vertex $V$ at the top left corner. Clearly, $P$ has no markers in $R(V)$.  If $V_1, V_2$ are vertices on the right/up border such that $V_1$ is either directly to the left of $V_2$ or directly below it, then the number of markers of $P$ in $R(V_2)$ is one greater than the number in $R(V_1)$.   \end{proof}

\begin{definition}
If $P$ is a full rook placement on a Ferrers board $F$, and $S=S(P,F)$, then we define another $F$-sequence $S^+$ by letting $S^+(V)=0$ if $S(V)=0$, and $S^+(V)=N(F,V)+1-S(V)$ otherwise.
\end{definition}

It is clear that $S^+$ is an $F$-sequence, because $S(V)\leq N(F,V)$.

\medskip

\begin{lemma} \label{bijection_lemma}
Let $P$ be a full rook placement on $F$ and let $S=S(P,F)$. Then if $(F,S)$ satisfies the 231-conditions (respectively,  the 312-conditions), $(F,S^+)$ satisfies the 312-conditions (respectively, the 231-conditions).
\end{lemma}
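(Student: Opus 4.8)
The plan is to check the three 312-conditions for $(F,S^+)$ one at a time, exploiting the fact that the 312-conditions differ from the 231-conditions only in the diagonal condition; thus conditions (i) and (ii) are literally the same requirement for both, and only the diagonal condition needs its inequality reversed. Throughout I would write $N(V)=N(F,V)$ and lean on three facts about how $N$ and $S=S(P,F)$ behave on the right/up border.

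The first fact is that $S(V)=0$ if and only if $N(V)=0$: indeed $S(V)$ is the length of a longest increasing sequence of $P$ inside $R(V)$, and this is $0$ exactly when $R(V)$ contains no marker. Since $S^+(V)=0$ precisely when $S(V)=0$ (the alternative $N(V)+1-S(V)=0$ is impossible because $S(V)\le N(V)$), the sequences $S$ and $S^+$ have identical zero-patterns, which immediately gives condition (ii) for $S^+$. The second fact, read off from the proof of Lemma~\ref{vertex_to_number_lemma}, is that along the border $N$ increases by $1$ at each rightward step and decreases by $1$ at each downward step; in particular, for a monotonicity pair with $V_1$ left of or below $V_2$ one has $N(V_2)=N(V_1)+1$. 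The third fact --- the key new ingredient --- is that \emph{diagonal vertices carry equal $N$-values}. This I would prove by a displacement count: if $V_1$ is the upper-left end and $V_2$ the lower-right end of a slope $-1$ diagonal in $F$, then $V_2-V_1=(d,-d)$, so the border path from $V_1$ to $V_2$ consists of $d$ rightward steps and $d$ downward steps, whence $N(V_2)-N(V_1)=d-d=0$.

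With these in hand the verification is a short case analysis. For the monotonicity condition on a pair $(V_1,V_2)$, the interesting feature is that the transformation swaps ``stays equal'' with ``increases by one'': using $N(V_2)=N(V_1)+1$, a computation shows that when $S(V_2)=S(V_1)$ (both positive) one gets $S^+(V_2)=S^+(V_1)+1$, when $S(V_2)=S(V_1)+1$ (both positive) one gets $S^+(V_2)=S^+(V_1)$, and the boundary case $S(V_1)=0$ (forcing $N(V_1)=0$ and, by the 0-condition, $S(V_2)=1$) gives $S^+(V_1)=0$, $S^+(V_2)=1$; in every case $S^+(V_1)\le S^+(V_2)\le S^+(V_1)+1$. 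For the diagonal condition, equality of $N$ on diagonal vertices is what reverses the inequality: if $S(V_1),S(V_2)$ are both positive then $S^+(V_i)=N+1-S(V_i)$ with a common $N$, so $S(V_1)\le S(V_2)$ becomes $S^+(V_1)\ge S^+(V_2)$; and if $S(V_1)=0$ then $N(V_1)=N(V_2)=0$ forces $S(V_2)=0$ as well, so both $S^+$-values vanish. I expect this diagonal step to be the main point of the argument, since it is exactly where the equal-$N$ fact does the work of turning the 231-inequality into the 312-inequality.

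Finally, for the reverse implication (the 231-conditions for $(F,S^+)$ from the 312-conditions for $(F,S)$) I would invoke the involution $(S^+)^+=S$, which holds because $S^+(V)=0\iff S(V)=0$ and $N(V)+1-(N(V)+1-S(V))=S(V)$. Since the arguments above only used that $S$ satisfies $S(V)\le N(V)$ together with $S(V)=0\iff N(V)=0$ --- properties shared by $S^+$ --- they in fact establish the biconditional ``$(F,T)$ satisfies the 231-conditions $\iff$ $(F,T^+)$ satisfies the 312-conditions'' for every such sequence $T$; applying this with $T=S^+$ and using $(S^+)^+=S$ yields the remaining direction. Alternatively, one simply repeats the case analysis verbatim with all inequalities reversed.
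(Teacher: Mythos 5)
Your proposal is correct and follows essentially the same route as the paper: the same computation for monotonicity using $N(V_2)=N(V_1)+1$, the same observation that $S^+$ and $S$ share zero-patterns for the 0-conditions, and the same key fact that diagonal vertices have equal $N$-values (equal numbers of rightward and downward border steps), which reverses the inequality. The only cosmetic difference is that you dispatch the converse direction via the involution $(S^+)^+=S$ where the paper simply notes the other case is nearly identical; both are fine.
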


\begin{proof}
We give the proof when $(F,S)$ satisfies the 231-conditions. The proof of the other case is nearly identical.

To verify the monotonicity conditions for $(F,S^+)$, first let $V_1, V_2$ be vertices on the right/up border of $F$ such that $V_1$ is directly to the left of $V_2$.  In the case that $S(V_1)=0$ then $S(V_2) = 1$ by the $231$-conditions.  Observe that $N(F,V_2) = 1$ as well, which implies that $S^+(V_1) = 0$ and $S^+(V_2)=1$.  In the case that $S(V_1)\neq 0$ then we know that $S(V_1) \leq S(V_2) \leq S(V_1) +1$.  Since $N(F,V_1) +1 = N(F,V_2)$ then we get
$$ N(F,V_1) +2 - S(V_1) \geq N(F,V_2)+1 - S(V_2) \geq N(F,V_1) +1 - S(V_1)$$
and hence  $1+S^+(V_1) \geq S^+(V_2) \geq S^+(V_1)$. The proof is the same if $V_1$ is directly below $V_2$ and therefore monotonicity holds.  

The 0-conditions hold for $(F,S^+)$ because $S^+(V)=0$ if and only if $S(V)=0$.

To verify the 312-diagonal condition for $(F,S^+)$, let $V_1, V_2$ be $F$-diagonal vertices.  We note that $N(F,V_1)=N(F,V_2),$  because $N(F,V)$ increases by one each time we move to the right on the right/up border, and decreases by one each time we move downward, and the number of rightward steps between $V_1$ and $V_2$ equals the number of downward steps.  By the 231-diagonal condition for $(F,S)$, we have $S(V_1)\leq S(V_2)$. If  $S(V_1) \neq 0$, then since $N(F,V_1)=N(F,V_2)$, 
we have $S^+(V_1)\geq S^+(V_2)$.  If $S(V_1) = 0$ then  $N(F,V_1)=0$ so $N(F,V_2)=0$ and thus $S(V_2)=0$.
\end{proof}

\begin{definitions}
Let $P\in S_F(231)$ and let $S=S(P,F)$.  By Theorems 1 and 2, let $\alpha(P)$ denote the unique element of $S_F(312)$ such that $S(\alpha(P),F)=S^+$.  For $P\in S_F(312)$, define $\beta(P)\in S_F(231)$ analogously.
\end{definitions}

\begin{theorem}\label{thm4}
The maps $\alpha: S_F(231)\rightarrow S_F(312)$ and $\beta:S_F(312)\rightarrow S_F(231)$ are inverses, and therefore both are bijections.
\end{theorem}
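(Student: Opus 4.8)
The plan is to reduce everything to a single observation: the operation $S \mapsto S^+$ on $F$-sequences is an involution. Once we know that $(S^+)^+ = S$, the theorem follows almost immediately by invoking the uniqueness built into Theorems~\ref{thm1} and~\ref{thm2} together with Lemma~\ref{bijection_lemma}.

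First I would verify that $(S^+)^+ = S$ for any $F$-sequence $S$ of the form $S(P,F)$. The crucial point is that the formula defining $S^+$ depends only on the numbers $S(V)$ and $N(F,V)$, and $N(F,V)$ depends only on $F$ and $V$ by Lemma~\ref{vertex_to_number_lemma}, not on the placement; so the $+$ operation is genuinely a function of the $F$-sequence alone. Since $S(V) \le N(F,V)$, we have $S^+(V) = 0$ exactly when $S(V) = 0$. Hence if $S(V) = 0$ then $(S^+)^+(V) = 0 = S(V)$, while if $S(V) \ne 0$ then $S^+(V) = N(F,V) + 1 - S(V) \ne 0$ and so
$$(S^+)^+(V) = N(F,V) + 1 - S^+(V) = N(F,V) + 1 - \bigl(N(F,V) + 1 - S(V)\bigr) = S(V).$$
This gives $(S^+)^+ = S$.

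Next I would chain together the two maps. Take $P \in S_F(231)$ and write $S = S(P,F)$; by Theorem~\ref{thm2} the pair $(F,S)$ satisfies the 231-conditions. By Lemma~\ref{bijection_lemma}, $(F,S^+)$ satisfies the 312-conditions, so by Theorem~\ref{thm2} there is a placement $Q \in S_F(312)$ with $S(Q,F) = S^+$, and by Theorem~\ref{thm1} this $Q$ is unique; by definition $Q = \alpha(P)$. Now apply $\beta$. Since $S(\alpha(P),F) = S^+$, the placement $\beta(\alpha(P))$ is by definition the unique element of $S_F(231)$ whose sequence is $(S^+)^+$. By the involution property this sequence equals $S = S(P,F)$, and $P$ itself is the unique element of $S_F(231)$ with sequence $S$ by Theorem~\ref{thm1}. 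Therefore $\beta(\alpha(P)) = P$. The identity $\alpha(\beta(P)) = P$ for $P \in S_F(312)$ follows by the same argument with the roles of the 231- and 312-conditions interchanged.

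I do not expect a genuine obstacle here: both the involution computation and the uniqueness bookkeeping are routine once Theorems~\ref{thm1} and~\ref{thm2} and Lemma~\ref{bijection_lemma} are in hand. The only point requiring care is to confirm that the $+$ operation is genuinely a function of the $F$-sequence alone, so that applying it twice makes sense and returns the original sequence; this is exactly the content of the observation that $N(F,V)$ is placement-independent.
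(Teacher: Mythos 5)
Your proposal is correct and takes essentially the same approach as the paper, which proves the theorem by observing that $S^{++}=S$ and appealing to the uniqueness from Theorems~\ref{thm1} and~\ref{thm2}; you have simply written out the involution computation and the bookkeeping that the paper leaves implicit.
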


\begin{proof}
This follows from the fact that if $S=S(P,F)$ for $P$ in either $S_F(231)$ or $S_F(312)$, then $S^{++}=S$.
\end{proof}

\begin{remark}
Although our proofs depend on the fact that we are working with full rook placements it follows from Theorem 3 that for any Ferrers board $F$ the number of 231-avoiding rook placements on $F$ is equal to the number of 312-avoiding rook placements on $F$.  The idea is as follows.  For any rook placement $P$ on $F$ we have the the set $\mathcal{C}$ of column numbers corresponding to columns that contain a marker.    Similarly we get the set $\mathcal{R}$ of row numbers.  Now we may consider the set of squares
$$F_P =\{ (c,r) | c\in \mathcal{C}, r\in \mathcal{R}\}.$$   
Observe that we may view $F_P$ as a Ferrers board by sliding all the squares down and then left.  Likewise, $P$ may be viewed as a full rook placement on $F_P$.   We may now define an equivalence relation $\sim$ on rook placements by saying two placements $P$ and $Q$ are related if and only if $F_P = F_Q$.  Now let $A$ (respectively, $B$) be the partition under $\sim$ of the set of 231-avoiding (respectively, 312-avoiding) rook placements on $F$. Clearly $|A| = |B|$ and if $\overline{P}\in A$ then Theorem 3 implies that $|\overline{P}| = |\overline{\alpha(P)}|$ proving our claim.  
\end{remark}

\section{The reverse algorithm}

We will prove Theorem 1 by developing an ``reverse algorithm" for the map $P \rightarrow S(P,F)$. To do this, we must first establish some properties of $S(P,V)$.

\begin{lemma}\label{inc_by_1} 
Let $P$ be a  rook placement on  Ferrers board $F$, and let $V_1$ and $V_2$ be vertices of $F$. Then if $V_1$ is directly to the left of $V_2$, or directly below $V_2$, we have
$$S(P,V_1)\leq S(P,V_2)\leq S(P,V_1)+1.$$
\end{lemma}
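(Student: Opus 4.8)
The plan is to treat the two inequalities separately, since the lower bound is pure monotonicity while the upper bound is where the rook condition enters. Throughout, I would use that in both configurations—$V_1$ directly to the left of $V_2$, or directly below $V_2$—we have the containment $R(V_1)\subseteq R(V_2)$.

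For $S(P,V_1)\le S(P,V_2)$, I would argue directly from this containment: any increasing sequence of $P$ lying in $R(V_1)$ also lies in $R(V_2)$, so a longest increasing sequence realizing $S(P,V_1)$ is a legitimate increasing sequence in $R(V_2)$ as well. Taking maxima yields $S(P,V_1)\le S(P,V_2)$.

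For $S(P,V_2)\le S(P,V_1)+1$, the key observation is that the difference region $R(V_2)\setminus R(V_1)$ is contained in a single row or column of $F$. If $V_1=(a,b)$ and $V_2=(a+1,b)$, then $R(V_2)\setminus R(V_1)$ is the strip of squares with $a\le x\le a+1$, i.e.\ a single column; if instead $V_2=(a,b+1)$, it is the strip with $b\le y\le b+1$, i.e.\ a single row. In either case, because $P$ is a rook placement it has at most one marker in that column (respectively, row), and hence at most one marker in all of $R(V_2)\setminus R(V_1)$. I would then take a longest increasing sequence $\sigma$ of $P$ in $R(V_2)$, of length $S(P,V_2)$, and delete from it the at-most-one marker lying outside $R(V_1)$. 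The result is still an increasing sequence, now contained entirely in $R(V_1)$, of length at least $S(P,V_2)-1$; hence $S(P,V_1)\ge S(P,V_2)-1$, which rearranges to the desired bound.

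I expect no real obstacle here. The only two points requiring a moment's care are verifying that $R(V_2)\setminus R(V_1)$ genuinely lies within a single row or column of $F$, so that the rook constraint caps its marker count at one, and observing that removing a single entry from an increasing sequence leaves an increasing sequence. Both are immediate, so the lemma reduces entirely to these elementary containment and counting facts; notably, no pattern-avoidance hypothesis on $P$ is needed.
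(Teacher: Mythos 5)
Your proof is correct and is simply the fully written-out version of what the paper dismisses as ``immediate from the definition'': the lower bound from $R(V_1)\subseteq R(V_2)$, and the upper bound from the fact that $R(V_2)\setminus R(V_1)$ lies in a single row or column, so the rook condition allows at most one marker there. Nothing to correct.
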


\begin{proof}
This follows immediately from the definition of $S(P,V)$.
\end{proof}

\begin{lemma}\label{lemma_growthrule}
Suppose $P$ is a rook placement on a Ferrers board $F$, and $A,B,C$ are the vertices at the $NW,NE$, and $SE$ corners, respectively, of a square $\mathcal{B}$ in $F$. Let $a,b,c$ be the values of $S(P,V)$ at $V=A,B,C$, respectively.   Then if $P$ has no marker in $\mathcal{B}$, we have $b=\textrm{max}(a,c)$. And $P$ has a marker in $\mathcal{B}$ if and only if $b=a+1=c+1$.
\end{lemma}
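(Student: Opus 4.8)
The plan is to analyze the quantity $S(P,V)$ directly in terms of the geometry of $R(V)$, exploiting the fact that the three regions $R(A)$, $R(B)$, $R(C)$ differ only by the single row and single column meeting at the square $\mathcal{B}$. Writing $A=(x,y)$ for the NW corner, the NE corner is $B=(x+1,y)$ and the SE corner is $C=(x+1,y-1)$. Thus $R(B)$ is obtained from $R(A)$ by adjoining one column (the column whose top square is $\mathcal{B}$), and $R(B)$ is obtained from $R(C)$ by adjoining one row (the row containing $\mathcal{B}$). The longest increasing subsequence in $R(B)$ therefore either uses a marker lying in the new column-and-row (equivalently, the marker in $\mathcal{B}$, if present) or it does not.

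First I would establish the inequality $b \ge \max(a,c)$, which is immediate: every increasing sequence of $P$ in $R(A)$ or in $R(C)$ is also an increasing sequence in the larger region $R(B)$, so $S(P,B)$ is at least each of $S(P,A)$ and $S(P,C)$. Combined with Lemma~\ref{inc_by_1} applied to the pairs $(A,B)$ and $(C,B)$, we already know $b \le a+1$ and $b \le c+1$, so $\max(a,c) \le b \le \min(a,c)+1$.

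Next I would handle the two directions of the marker criterion. For the forward direction, suppose $P$ has a marker $m$ in $\mathcal{B}$. Since $m$ is the unique square in $R(B)$ that lies in both the new column (to the right of $R(A)$) and the new row (above $R(C)$), any increasing sequence in $R(A)$ can be extended by appending $m$ (as $m$ is strictly to the right of and strictly above everything in $R(A)$), giving $b \ge a+1$; symmetrically $b \ge c+1$. With the upper bounds from Lemma~\ref{inc_by_1} this forces $b=a+1=c+1$. For the converse, I would prove the contrapositive: if $\mathcal{B}$ has no marker, then $b=\max(a,c)$, so in particular $b\ne a+1$ and $b\ne c+1$ cannot both hold. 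The key point is that when $\mathcal{B}$ is empty, the new column of $R(B)$ (relative to $R(A)$) contains a marker only in a row already present in $R(C)$, and likewise the new row contains a marker only in a column already present in $R(A)$; hence any increasing sequence realizing $S(P,B)$ can be rerouted to lie entirely within $R(A)$ or entirely within $R(C)$ without decreasing its length. This yields $b\le \max(a,c)$, and together with $b\ge\max(a,c)$ gives equality.

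The main obstacle is the rerouting argument in the converse direction: I must argue carefully that a longest increasing sequence in $R(B)$ cannot genuinely require both the extra row and the extra column at once when $\mathcal{B}$ is empty. The cleanest way is to observe that a marker can sit in the new column only within the single shared row (the row of $\mathcal{B}$) and a marker in the new row only within the single shared column (the column of $\mathcal{B}$); since $\mathcal{B}$ itself is empty, no single marker occupies both, so an optimal increasing sequence uses at most one of these two "new" markers, and whichever it uses places the whole sequence inside either $R(C)$ or $R(A)$ respectively. Handling the at-most-one-marker-per-row-and-column structure of a rook placement is what makes this step rigorous rather than routine, and it is where I would spend the most care.
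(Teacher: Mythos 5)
Your proposal is correct and follows essentially the same route as the paper: the paper's proof also establishes $\max(a,c)\leq b$ via Lemma~\ref{inc_by_1} and then shows $b\leq\max(a,c)$ when $\mathcal{B}$ is empty by observing that a longest increasing sequence in $R(B)$ either lies in $R(C)$ or must terminate at the top-row marker, which (since $\mathcal{B}$ is empty) lies to the left of $\mathcal{B}$, forcing the sequence into $R(A)$. Your "at most one of the two new markers can appear in an increasing sequence" observation is just a rephrasing of the same rook-placement fact, and the rest of your argument matches the paper's.
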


\begin{proof}  
First suppose $P$ has no marker in $\mathcal{B}$. Consider an increasing sequence $I$  of length $b$ in $R(B)$.   If $I$ is contained in $R(C)$, then $b\leq c$.  If $I$ is not contained in $R(C)$, then $I$ must include a marker in the top row of $R(B)$, so $I$ terminates at this marker, which is to the left of $\mathcal{B}$, and therefore $I$ is contained in $R(A)$, yielding $b\leq a$. In either case, $b\leq \textrm{max}(a,c)$.  Since the reverse inequality follows from Lemma \ref{inc_by_1}, we have $b=\textrm{max}(a,c)$.
  
  It follows that if $P$ has no marker in $\mathcal{B}$ then we cannot have $b=a+1=c+1$.  It is clear that if $P$ has a marker in $\mathcal{B}$ then $b=a+1=c+1$.
\end{proof}

\begin{lemma}\label{equality_of_labels}
Suppose $P\in S_F(231)$ and $V_1,V_2$ are vertices  of $F$ such that $V_1$ is directly below $V_2$. Suppose $P$ has a marker $X$  in the top row of $R(V_2)$, and another marker $Y$ in $R(V_2)$  that is to the right of $X$.  Then $S(P,V_1)=S(P,V_2)$.
\end{lemma}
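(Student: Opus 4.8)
The plan is to combine the sandwiching inequality of Lemma~\ref{inc_by_1} with the avoidance hypothesis in an argument by contradiction. Since $V_1$ is directly below $V_2$, Lemma~\ref{inc_by_1} gives $S(P,V_1)\le S(P,V_2)\le S(P,V_1)+1$, so it suffices to rule out the possibility $S(P,V_2)=S(P,V_1)+1$. Assuming this strict increase, I would manufacture a forbidden $231$ pattern among the markers of $P$ lying in $R(V_2)$.

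First I would record the geometry. Because $X$ lies in the top row of $R(V_2)$ and each row of $F$ carries at most one marker, $X$ is the \emph{unique} marker of $P$ in that top row; the marker $Y$, being to the right of $X$, must therefore lie strictly below the top row, and hence inside $R(V_1)$. In particular $R(V_1)$ contains a marker, so $S(P,V_1)\ge 1$. Writing $s=S(P,V_2)$ and assuming $s=S(P,V_1)+1$, I would then analyze a maximum increasing sequence $I$ of length $s$ in $R(V_2)$: such a sequence cannot lie entirely in $R(V_1)$ (whose increasing sequences have length at most $s-1$), so it must use the top row, and its top-row element can only be $X$. As $X$ is then the highest element of $I$, the sequence ends at $X$; deleting $X$ leaves an increasing sequence $I'$ of length $s-1$ contained in $R(V_1)$, which by maximality is a longest increasing sequence in $R(V_1)$. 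Let $W$ be the top (rightmost, highest) element of $I'$, so $W$ lies strictly to the left of and strictly below $X$.

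The key step is to use $Y$ to pin down the configuration. Every element of $I'$ lies to the left of $X$, hence to the left of $Y$. If $Y$ were higher than $W$, then $I'$ followed by $Y$ would be an increasing sequence of length $s$ lying wholly in $R(V_1)$, contradicting $S(P,V_1)=s-1$; therefore $Y$ is lower than $W$. Now the three markers $W$, $X$, $Y$ occur in this left-to-right column order and have heights satisfying $\operatorname{ht}(Y)<\operatorname{ht}(W)<\operatorname{ht}(X)$, which is precisely a $231$ pattern. Since $V_2$ is a vertex of $F$, we have $R(V_2)\subseteq F$, so all three markers lie in a rectangle inside $F$; choosing a right/up border vertex $V$ with $R(V)\supseteq R(V_2)$ (for instance the top vertex of the column through $V_2$) exhibits a $231$ occurrence in the restriction of $P$ to $R(V)$, contradicting $P\in S_F(231)$. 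This contradiction forces $S(P,V_2)=S(P,V_1)$.

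The main obstacle is exactly this third step: recognizing that the \emph{failure} to extend the maximal sequence $I'$ by $Y$ is what forces $Y$ below $W$, thereby converting the abstract assumption $S(P,V_2)=S(P,V_1)+1$ into a concrete $231$ configuration. The edge case $S(P,V_1)=0$ must also be dispatched, but this is immediate since $Y\in R(V_1)$ guarantees $S(P,V_1)\ge 1$ and hence that $W$ exists.
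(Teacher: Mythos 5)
Your proof is correct, but it is organized differently from the paper's. The paper argues directly: writing $L$ and $R$ for the markers of $R(V_2)$ strictly left and strictly right of $X$, it first observes that 231-avoidance (with $X$ as the ``3'') forces every element of $R$ to lie above every element of $L$, and then concludes that both $S(P,V_1)$ and $S(P,V_2)$ equal the sum of the longest increasing sequences in $L$ and in $R$ (using $R\neq\emptyset$ because of $Y$). You instead argue by contradiction: assuming $S(P,V_2)=S(P,V_1)+1$, you extract a maximum increasing sequence, show it must end at $X$, let $W$ be the top of the truncated sequence $I'$, and use the failure of $I'$ to extend by $Y$ to force $Y$ below $W$, producing the explicit 231 pattern $W,X,Y$. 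The underlying combinatorial fact is the same --- a marker right of $X$ sitting below a marker left of $X$ creates a 231 --- but the paper converts it into a global separation of $L$ and $R$ and an exact formula for both quantities, while you only need to locate a single forbidden triple. Your version is slightly longer but requires no computation of the two lengths; the paper's version yields the stronger structural description as a byproduct. One small point you handled more carefully than the paper: since avoidance is defined via vertices on the right/up border, you explicitly embed $R(V_2)$ into some $R(V)$ with $V$ on that border before invoking the hypothesis, which is a legitimate (and necessary) step that the paper leaves implicit.
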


\begin{proof}
Since $P\in S_F(231)$, $P$ has no 231-patterns in $R(V_2)$.  If $R$ is the set of markers of $P$ in $R(V_2)$ that are to the right of $X$, and $L$ is the set of markers of $P$ in $R(V_2)$ that are to the left of $X$, it follows that all elements of $R$ are in higher rows than all elements of $L$.  Since $R\neq \emptyset$ because of the presence of $Y$,  both $S(P,V_1)$ and $S(P,V_2)$ are the sum of the maximal length of an increasing sequence in $L$ and the maximal length of an increasing sequence in $R$.  This proves the lemma.  
\end{proof}

\begin{proof}[Proof of Theorem 1]
It will suffice to prove the result for $P\in S_F(231)$, for then  by considering the inverse placement $P'$ on the conjugate board $F'$, we obtain the result for $P\in S_F(312)$. 

So let $P\in S_F(231)$. Suppose the bottom row of $F$ contains exactly $n$ squares, and the right-hand column of $F$ contains exactly $r$ squares.   Let the values of $S(P,V)$ on the line $x=n$ be $b_r,
\ldots, b_0$, from top to bottom, and let the values on the line $x=n-1$ be $a_r,\ldots, a_0$, again from top to bottom. The values $a_r,b_r,b_{r-1},\ldots, b_0$ are included in $S(P,F)$, and we will show that from these values we can determine the location of the marker of $P$ in the right-hand column, and the values of $a_{r-1},\ldots, a_0$.

Choose $j$ as large as possible such that $b_j > b_{j-1}$. Then the markers $X_r,\ldots,\\ X_{j+1}$ of $P$ in rows $r, \ldots, j+1$ are not in the right-hand column, and since there is a marker $Y$ in the right-hand column and there are no 231-patterns in 
$R((n,r))$, the markers $X_r,\ldots, X_{j+1}$ must form a decreasing sequence. Applying 
Lemma \ref{equality_of_labels} repeatedly, with the $X$ of that lemma being $X_r,\ldots, X_{j+1}$ in turn, we conclude that $a_r=\cdots =a_{j+1}$.  The marker $X_j$ in row $j$ must be $Y$, for else, using $X_j,Y$ and Lemma \ref{equality_of_labels}, we would have $b_j=b_{j-1}$. It follows that $a_j=b_{j-1}$ and $a_i=b_i$ for $i\leq j-1$.

We have determined the placement of the marker $Y$ in the right-hand column and the values of  $a_{r-1},\ldots, a_0$. If we delete the right-hand column and the row containing the marker $Y$ we obtain a smaller board $F^*$ and a placement $P^*\in S_{F^*}(231)$ such that the sequence of values $S(P^*,F^*)$ is $S(P,F)$ with the terminal  $r+1$ values $b_r, \ldots, b_0$ replaced by the $r-1$ values $a_{r-1},
\ldots, a_j,a_{j-2}, \ldots, a_0$. By iterating the above argument we can proceed to determine the  positions of all the markers in $P$, from right to left.
\end{proof}

\section{The proof of Theorem 2}

To prove Theorem 2, it will suffice to prove the assertion about $P\in S_F(231)$, for the assertion about $P\in S_F(312)$ then follows by considering the inverse placement $P'$ on the conjugate board $F'$, with $P' \in S_{F'}(231)$.

\medskip

\noindent \textbf{Notation}. Let $F$ be a Ferrers board whose longest row and longest column each contain exactly $n$ squares.  Let $\mathcal{B}$ be the square at the top of the right-hand column of $F$, and suppose $\mathcal{B}$ is in row $r$. Let $A,B,C$ be the vertices at the $NW, NE,$ and $SE$ corners of $\mathcal{B}$.

\medskip

We will first prove the necessity of the 231-conditions, then the sufficiency.
  
\medskip

\noindent\emph{Proof of Necessity.}

\medskip

The monotonicity conditions are clear by Lemma \ref{inc_by_1}, and it is also clear that $S(P,F)$  starts and ends with the value 0.  If the values of $S(P,F)$ at two successive vertices were both $0$, then if one of these vertices were below (respectively, to the left of) the other, $F$ would have a row (respectively, a column) with no marker in it, contradicting the fact that $P$ is a full rook placement.  

We now prove the diagonal condition by induction on the number of squares in $F$.  For a board with one square, it is obvious that the 231-diagonal condition holds for the only possible placement.  Assume now that $P \in S_F(231)$ and the result holds for all boards with fewer squares than $F$.  

\medskip

\noindent{\tt{Case 1:}} $\mathcal{B}$ contains a marker.  

\medskip

 Let $V_0,\ldots V_{2n}$ be the sequence of vertices on the right/up border of $F$ starting at the top left corner of $F$, and let $B=V_k$.   Since $S(P,V_{k-1}) =  S(P,V_{k+1})$ by Lemma \ref{lemma_growthrule}, it will suffice to check the diagonal condition for all diagonal vertices not containing $V_{k+1}$.  To this end denote by $F^*$ and $P^*$ the board and placement obtained by deleting the row and column of $\mathcal{B}$ from $F$.  Now let $V^*_i$ be the vertex directly under $V_i$ for $0\leq i\leq k-1$ and the vertex directly to the left of $V_i$ for $k+2\leq i\leq 2n$.  Observe that the sequence 
$$V^*_0,\ldots,V^*_{k-1},V^*_{k+2},\ldots,V^*_{2n}$$ 
is precisely the sequence of vertices on the right/up border of $F^*$. Fix $i,j \notin \{k,k+1\}$.  It is clear that 
\begin{equation}
S(P,V_i) = S(P^*,V^*_i)
\end{equation}
and that
\begin{equation}
V_i, V_j \textrm{ are } F\textrm{-diagonal vertices} \text{ iff } V^*_i, V^*_j\textrm{ are }F^*\textrm{-diagonal vertices.}
\end{equation}  
By induction $S(P^*,F^*)$ satisfies the diagonal condition.  Therefore (1) and (2) directly imply that $S(P,F)$ also satisfies the diagonal condition.

\medskip

\noindent{\tt{Case 2:}} $\mathcal{B}$ does not contain a marker.  

\medskip

Note that in this case we must have $r\geq 2$, and consider the smaller board $F^* = F\setminus \mathcal{B}$.  By the induction hypothesis the pair $(F^*,S(P,F^*)))$ satisfies the diagonal condition.  So we only need to show that $S(P,A)\leq S(P,C)$.   Since $\mathcal{B}$ contains no marker, Lemma \ref{equality_of_labels} implies that $S(P,C) = S(P,B)$.  By monotonicity we must have $S(P,A)\leq S(P,B)$ which concludes this case. $\Box$

\medskip
 
\noindent\emph{Proof of Sufficiency.}

\medskip

We prove the sufficiency of the 231-conditions by again using induction on the number of squares in $F$.  Let $S$ be an $F$-sequence such that $(F,S)$ satisfies the 231-conditions, and let $a,b,c$ denote $S(A),S(B),S(C)$, respectively.

\medskip
 
\texttt{Case 1}: $b\neq a$ and $b\neq c$.  

\medskip

First note that in this case we must have $a+1=b=c+1$ by monotonicity of $S$.  Define $V_0,\ldots, V_{2n}$ as in the proof of necessity, with $B=V_k$.   Also define $F^*$,  $P^*$,  and vertices $V^*_i$ as in that proof, so that the sequence 
$$V^*_0,\ldots, V^*_{k-1},V^*_{k+2},\ldots,V^*_{2n}$$
is precisely the sequence of vertices on the right/up border of $F^*$.  

Now define $S^*(V_i^*)=S(V_i)$ for $i\notin \{k,k+1\}$.  We claim that $(F^*,S^*)$ satisfies the 231-conditions. Since $a=c$ it is clear that $(F^*,S^*)$ satisfies the monotonicity conditions. To see that $(F^*,S^*)$ satisfies the 0-conditions it suffices to show that $a\neq 0$ when $r\geq 2$. So suppose $r\geq 2$ and $a=0$.  Draw the diagonal $\ell$ extending NW from $A$ and let $V_1$ be the first vertex on the right/up border where $\ell$ passes outside of $F$.  Since $A$ is above the diagonal from upper left to lower right, there must be a vertex $V_2$ on the right/up border directly to the left of $V_1$.  Since $a=0,$ the diagonal and monotonicity conditions for $(F,S)$ yield $S(V_1)=0$ and $S(V_2)=0,$ contradicting the 0-conditions for $(F,S)$.

To verify the diagonal condition for $(F^*,S^*),$ note that:  $V_i,V_j$ are $F$-diagonal vertices if and only if $V_i^*,V_j^*$ are $F^*$-diagonal vertices.  Therefore since $(F,S)$ satisfies the diagonal condition so must $(F^*,S^*)$.

Since $(F^*,S^*)$ satisfies the 231-conditions, there exists, by the induction hypothesis, $P^*\in S_{F^*}(231)$ such that $S(P^*,F^*)=S^*$. Now restore the row and column we removed from $F$ and place a marker $X$ in square $\mathcal{B}$ to obtain a placement $P$ on $F$.  It is clear that $P\in S_F(231),$ because of the position of $X$. Lastly we show that $S(P,F)=S$.   Note that for $V_i\neq B$ or $C$
\begin{equation}
S(P,V_i)=S(P^*,V^*_i)=S^*(V^*_i)=S(V_i).
\end{equation}
Since $P$ has a marker in $\mathcal{B}$, we have $S(P,A)+1 = S(P,C)+1 = S(P,B)$. By (3) we know that $S(P,A) =a$.  Putting these together we have that $S(P,B)= a+1=b$ and $S(P,C)=a=c$.

\medskip
 
\texttt{Case 2}: $b=a$ or $b=c$.

\medskip

Note that in this case we cannot have $r=1$, because if $r=1$ then $c=0$, so by the diagonal condition for $(F,S)$, $a=0$ and thus $b=0$, violating the 0-conditions for $(F,S)$. So we can let $D$ be the vertex directly below $C$. Let $E$ be the vertex at the SW corner of $\mathcal{B}$, and let $d$ denote $S(D)$.  Denote by $F^*$ the Ferrers board $F\setminus \mathcal{B}$.  

Now consider the function $S^*$ defined by  

$$S^*(V) = \left\{\begin{array} {ll}
S(V)& \textrm { if }  V\neq E\\
\min{(a,d)} & \textrm { if }  V = E
\end{array}\right.$$
where $V$ is a vertex on the right/up border of $F^*$.
  
In order to apply the induction hypothesis to the smaller pair $(F^*, S^*)$ we need to know that $(F^*,S^*)$ satisfies the 231-conditions. Since $r\geq 2,$ we have $a\neq 0$ as in Case 1, and $a\leq c$. Since $(F,S)$ satisfies both the monotonicity and 0-conditions  it easily follows that $(F^*,S^*)$ satisfies these two conditions as well. 

So it only remains to show that $(F^*,S^*)$ satisfies the diagonal condition. Now for the diagonal extending SE from $E$ we have $ S^*(E)=\min(a,d) \leq d = S^*(D)$.  Next consider the diagonal extending NW from $E$ and let its right-most intersection point with the right/up border be $E_0$. (Note that $E_0$ exists since $r\geq 2$.)  Call the vertex to $E_0$'s immediate right $A_0$ and note that $A_0$ must be on the right/up border.  Our choice of $E_0$ implies that $A$ and $A_0$ are diagonal vertices.  Now if $\min(a,d) = a$ then by our definitions we have
$$S^*( E_0 )\leq S^*(A_0) = S(A_0)\leq S(A) = S^*(E).$$
If on the other hand $\min(a,d) = d$ then clearly $S^*(E_0)\leq S^*(E)$ since $E_0$ and $D$ are diagonal vertices in $F$.  Therefore $(F^*,S^*)$ satisfies the diagonal condition.  

Since the pair $(F^*,S^*)$ satisfies the 231-conditions then by the induction hypothesis there exists a 231-avoiding full rook placement $P$ on $F^*$ such that $S^* = S(P,F^*)$. We claim that $P$ is also a 231-avoiding full rook placement on $F$ such that $S=S(P,F)$.

To see that $S=S(P,F)$ let $V$ be any vertex on the right/up border of $F$. If $V\neq B$  then we have $S(P,V)=S^*(V)=S(V)$.
If $V=B$ then since $\mathcal{B}$ does not contain a marker we have $S(P,B)= \max(a, c) = b$ where the last equality holds because $\max{(a,c)}\leq b$ by the monotonicity of $S$ and $b\leq \max(a,c)$ since $b=a$ or $b=c$ in this case.

Lastly we need to show that $P$ is a 231-avoiding placement on $F$.  Assume it is not and let $XYZ$ be a 231-pattern in $F$.    Let marker $Y$ be in square $\mathcal{B}_1$ and $Z$ be in square $\mathcal{B}_2$.  Note that square $\mathcal{B}_1$ must be in row $r$, along with square $\mathcal{B}$. Likewise, note that  $\mathcal{B}_2$ must be in the right-hand column. Since $P$ in $F^*$ has no 231-patterns then all the markers in the columns strictly between $\mathcal{B}_1$ and $\mathcal{B}$  must be above row $r$.  For if not then some marker $W$ is either in a row below $X$'s row in which case $XYW$ is a 231-pattern in $F^*$, or $W$ is in a row between $X$'s row and $Y$'s row resulting in the 231-pattern $XWZ$.  So if $\bar{A}$ and $\bar{E}$ denote the vertices in the NE and SE corners of $\mathcal{B}_1$ respectively then it follows that, letting $e=\min{(a,d)}$,
$$ \begin{array}{ccc}
S(P,\bar{A})=S(P,A)=a&\textrm{ and }& S(P,\bar{E})=S(P,E)=e.\end{array}$$
If we could show that $a=e,$ it would follow from Lemma \ref{lemma_growthrule} that $\mathcal{B}_1$ could not contain a marker.  But this would be a contradiction as $\mathcal{B}_1$ contains the marker $Y,$ and we would be done.  To show that $a=e$,   note  that $Z$ cannot be in row $r-1$, because $XYZ$ is a 231-pattern.  Since row $r-1$ must contain  a marker, Lemma \ref{equality_of_labels} implies that $c=d$ and therefore $e =\min(a,d) = a$ since $a\leq c$ by the diagonal condition.  $\Box$

\medskip

\centerline{\textbf{References}}

\medskip

\footnotesize
\noindent1. J. Backelin, J. West, G. Xin, Wilf-equivalence for
singleton classes, \emph{Adv. Appl. Math.} \textbf{38} (2007), 133--148.\\[5pt]
2. S. Fomin, Generalized Robinson-Schensted-Knuth correspondence, \emph{Zapiski Nauchn. Sem. LOMI} \textbf{155} (1986), 156--175.\\[5pt]
3. V. Jel\'inek, Dyck paths and pattern--avoiding matchings, \emph{European J. Comb.} \textbf{28} (2007), 202--213.\\[5pt]
4. Z. Stankova, J. West, A new class of Wilf-equivalent permutations, \emph{J. Alg. Comb.} \textbf{15} (2002), 271--290.\\[5pt]
\end{document}